\documentclass[a4paper, 11pt]{amsart}
\input xy
\swapnumbers
\usepackage[top=4cm, bottom=3.5cm, left=2cm, right=2cm,marginparwidth=1.8cm, marginparsep=0.1cm]{geometry}
\usepackage[OT2, T1]{fontenc}
\usepackage[english]{babel}
\usepackage{amssymb}
\usepackage{amsthm}
\usepackage{enumerate}
\usepackage[active]{srcltx}
\usepackage{amsmath}
\usepackage{nicefrac}
\usepackage{tikz-cd} 
\usepackage[colorlinks=true]{hyperref}
\usepackage{lipsum}
 \usepackage{xfrac}
\usepackage{faktor}
\usepackage{scalerel}
\usepackage[inline]{enumitem}

\newtheorem{proposition}{Proposition}[section]
\newtheorem{theorem}[proposition]{Theorem}
\newtheorem{lemma}[proposition]{Lemma}
\newtheorem{corollary}[proposition]{Corollary}

\newtheorem{claim}[proposition]{Claim}

\newtheorem{definition}[proposition]{Definition}

\theoremstyle{remark}
\newtheorem{remark}[proposition]{Remark}

\newtheorem{examples}[proposition]{Examples}

\newtheorem*{remark*}{Remark}
\newtheorem*{remarks*}{Remarks}

\numberwithin{equation}{section}

\newcommand{\R}{\mathbb{R}}
\newcommand{\N}{\mathbb{N}}
\newcommand{\Z}{\mathbb{Z}}

\newcommand{\vW}{\stackrel{w}{\ast}}

\makeatletter
\DeclareRobustCommand\bigop[2][1]{%
 \mathop{\vphantom{\bigoplus}\mathpalette\bigop@{{#1}{#2}}}\slimits@
}
\newcommand{\bigop@}[2]{\bigop@@#1#2}
\newcommand{\bigop@@}[3]{%
 \vcenter{%
 \sbox\z@{$#1\bigoplus$}%
 \hbox{\resizebox{\ifx#1\displaystyle#2\fi\dimexpr\ht\z@+\dp\z@}{!}{$\m@th#3$}}%
 }%
}
\makeatother

\begin{document}
\title[Word problem in verbal products]{The word problem in verbal products of groups}

\author[J. Brude, R. Sasyk]{Javier Brude $^{1}$ \MakeLowercase{and} Rom\'an Sasyk $^{2,3}$}

\address{$^{1}$Villa Luro, Buenos Aires, Argentina.}

\address{$^{2}$Instituto Argentino de Matem\'atica Alberto P. Calder\'on-CONICET,
Saavedra 15, Piso 3 (1083), Buenos Aires, Argentina.}

\address{$^{3}$Departamento de Matem\'atica, Facultad de Ingenier\'ia, Universidad de Buenos Aires, Argentina.}

\email{\textcolor[rgb]{0.00,0.00,0.84}{javierbrd@gmail.com}}
\email{\textcolor[rgb]{0.00,0.00,0.84}{rsasyk@fi.uba.ar}}

\subjclass[2020]{20E06, 20F10, 20F14, 20F65, 20F67}

\date{}
\keywords{Word problem, membership problem, verbal product, hyperbolic groups}

\begin{abstract}	
We prove that nilpotent and metabelian products of groups with decidable word problem have decidable word problem. In contrast, we exhibit families of groups with decidable word problem for which their solvable product have undecidable word problem. In doing so, we exhibit hyperbolic groups with undecidable membership problem to its derived subgroups.
\end{abstract}

\maketitle
\section{Introduction}

Given a family of groups, the direct sum and the free product provide ways of constructing new groups out of it.
Even though both operations are quite different, they share some common features, like commutativity,  associativity, etc.
In \cite{KUR53}, Kurosh asked if there were other operations on a family of groups alike these two. 
This problem was solved in the affirmative in  \cite{GOL56METAB,GOL56NILP}, where, for each $k\in \N$, 
Golovin defined the $k${\it -nilpotent product} of groups and analysed some structural properties of them. 
In  \cite{MOR56,MOR58}, 
Moran framed nilpotent products as an instance of his more general  construction of  {\it verbal products of groups}, 
a way of defining operations in groups that fulfil Kurosh's requirements.

One notable fact that motivated our early research on this topic
is that verbal products interpolate between the direct product and the free product: each variety of groups determines 
one such product, the two extreme cases being the abelian variety, which gives the direct
 product, and the trivial variety, which gives the free product.  In \cite{SAS18,MR4502610} we undertook the analysis of
verbal products from the point of view of geometric and measurable group theory. 
In particular, in those articles we analysed several permanence properties that are of interest in these areas,
 among them: amenability, the Haagerup property,  Kazhdan's property (T),  soficity,  hyperlinearity and orderability.

The present paper continues that line of work, but now on the algorithmic side: we ask which verbal
products preserve the decidability of the word problem. 
The question makes sense since the most studied examples of verbal products, namely
the direct product and the free product, obviously preserve it. In stark contrast, we show that for other verbal products
the answer depends on the variety in a drastic way. On the positive side, the nilpotent products 
and the metabelian product always preserve 
word problem decidability, see Theorem \ref{nilpotentcase}. For
 the rest of the solvable products one can find counterexamples, 
see Theorems \ref{main theorem} and \ref{last theorem}, which are at the core of the article.
Along the way we obtain a result which seems to us of independent interest:
Let be $\lambda <1$. For every $n\geq3$ there exists a finitely presented group with small cancellation property $C'(\lambda)$
 such that there is no algorithm deciding whether a given element belongs to the $n$-th derived subgroup.
 In particular, by taking $\lambda <1/6$, in Proposition \ref{prop princ} we exhibit a hyperbolic group with the same property.

 \section{Preliminaries about verbal products}

In this section we list the few facts about verbal products of groups that will be needed in the article.
The notations, conventions and terminology used here are the ones from \cite{MR4502610}, and we refer the reader to that paper for more on verbal products and verbal wreath products.
In particular, for us $[a,b]= aba^{-1}b^{-1}$  and $A^b=bAb^{-1}$.\\

Let $F_\infty$ be the free group on countable many letters $\{x_i\}_{i\in \mathbb{N}}$. Its elements will be called {\it words}. 
If $w\in F_\infty$, the length of $w$ will be denoted by $\ell(w)$.  A word in $n$-letters is an element of $F_\infty$ that requires at most $n$ distinct letters to be written in reduced form.  

\begin{definition}
  Let $G$ be a group. Let  $W\subseteq F_{\infty}$ be a nonempty set of words. The verbal subgroup $W(G)$ is defined as the subgroup of $G$ generated by the evaluation of all the elements of $W$ by the elements of $G$. 
   \end{definition}
   
Verbal subgroups are normal, in fact they are fully invariant, see for instance \cite[Lemma 2.3]{MR4502610}.

 \begin{definition}
Let $A$ and $B$ be groups. $[A,B]$ is the subgroup of $A\ast B$ generated by the elements of the form $[a,b]$ with $a\in A$ and $b\in B$.
\end{definition}
The group $[A,B]$ is free, and normal in $A\ast B$, see for instance  \cite[Chapitre I, $\S$ 1.3, Proposition 4]{serreTrees}.

\begin{definition} \label{def of verbal product}
Let $A,B$ be groups and let $W\subseteq F_{\infty}$ be a set of words. The verbal product between $A$ and $B$ (relative to the words in $W$), is the group 
\[A\vW B := \faktor{A\ast B}{W(A\ast B)\cap [A,B]}\]
\end{definition}

We list verbal subgroups and verbal products needed in this article, (see \cite[$\S$2]{MR4502610}).
\begin{examples}[of verbal subgroups] \label{examples of verbal groups} Given a group $G$, the verbal subgroup given by
\begin{enumerate}[label=(\roman*)]
\item the empty word is the identity of $G$;
\item \label{item1examples} the word $n_1:=[x_2,x_1]$  is  the commutator subgroup of $G$;
\item \label{item2examples} the words  $n_k:=[x_{k+1},n_{k-1}]$ with $k\in \mathbb N_{\geq 2}$ recursively yield the lower central series of $G$;
\item \label{item3examples} the words
\begin{align*}
s_1(x_1,x_2)&:=[x_1,x_2]\\
  s_k(x_1,\ldots,x_{2^k})
& :=
[s_{k-1}(x_{1},\ldots,x_{2^{k-1}}),s_{k-1}(x_{2^{k-1}+1},\ldots,x_{2^{k}})]
\end{align*}
recursively yield the derived series of $G$.
\end{enumerate}
\end{examples}

\begin{examples} [of verbal products]\label{examples of products} The words in the Examples \ref{examples of verbal groups} give the following verbal products.
\begin{enumerate}[label=(\roman*)]
   \item If $W=\{\text{The empty word}\}$,  $A\vW B = A\ast B$;
    \item \label{item1examples product}if $W=\{n_1\}$, with $n_1$ defined in \ref{examples of verbal groups}\ref{item1examples},then $A \vW B = A \times B$;
    \item  \label{item2examples product} if $W=\{n_k\}$, with $n_k$ defined in \ref{examples of verbal groups}\ref{item2examples}, then $A \vW B$  
    is called the $k$-nilpotent product; 
    \item  \label{item3examples product}if $W=\{s_k\}$, with $s_k$ as in \ref{examples of verbal groups}\ref{item3examples}, then $A\vW B$
    is called the $k$-solvable product. When $k=2$ it is also called the metabelian product.
\end{enumerate}
\end{examples}

\begin{proposition}\cite[Proposition 3.1]{MR4502610} \label{preservenilp} Let $A,B$ be finitely generated nilpotent groups. Let $W\subseteq F_\infty$ be the set of words in Example \ref{examples of verbal groups}\ref{item2examples}
 that defines a nilpotent product of groups.  Then $A \vW B$ is a finitely generated nilpotent group.
\end{proposition}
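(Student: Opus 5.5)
The plan is to show two things: that $A\vW B$ is nilpotent, and that it is finitely generated. Write $W=\{n_k\}$ and $G=A\ast B$, so that $W(G)=\gamma_{k+1}(G)$ and
\[A\vW B = G/(\gamma_{k+1}(G)\cap[A,B]).\]
Finite generation is immediate. $A\vW B$ is a quotient of $A\ast B$, and $A\ast B$ is generated by the finite generating sets of $A$ and $B$ together.

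For nilpotency, let $N=\gamma_{k+1}(G)\cap[A,B]$ and consider the map $\varphi\colon G\to G/\gamma_{k+1}(G)\times A\times B$. Its first coordinate is the quotient map, and its other two coordinates are the natural projections $\pi_A\colon G\to A$ and $\pi_B\colon G\to B$, which kill $B$ and $A$ respectively. I will check that $\ker\varphi = N$.

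\begin{enumerate}[label=(\arabic*)]
\item The kernel of $(\pi_A,\pi_B)\colon A\ast B\to A\times B$ is exactly the normal closure of $\{[a,b]\}$. This kernel equals $[A,B]$, since $[A,B]$ is normal (as recalled above) and the quotient $(A\ast B)/[A,B]$ is abelian-free in the cross terms, hence is $A\times B$.
\item Therefore $\ker\varphi=\gamma_{k+1}(G)\cap[A,B]=N$.
\end{enumerate}

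It follows that $A\vW B$ embeds in $G/\gamma_{k+1}(G)\times A\times B$. The first factor is nilpotent of class at most $k$. The factors $A$ and $B$ are nilpotent by hypothesis. A finite direct product of nilpotent groups is nilpotent, of class equal to the maximum of the classes, and subgroups of nilpotent groups are nilpotent. Hence $A\vW B$ is nilpotent.

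The only step needing care is (1), namely the identification $\ker(A\ast B\to A\times B)=[A,B]$. To verify it:
\begin{itemize}
\item The inclusion $\supseteq$ is clear, since each $[a,b]$ maps to the identity in $A\times B$.
\item For $\subseteq$, note that modulo $[A,B]$ every element of $A$ commutes with every element of $B$. So $(A\ast B)/[A,B]$ is a quotient of $A\times B$ through which the surjection onto $A\times B$ factors, forcing equality.
\end{itemize}
Since the normality of $[A,B]$ is already recalled from Serre, this step is routine, and the whole argument avoids any structural analysis of the nilpotent product.
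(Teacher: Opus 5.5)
Your proof is correct. The paper gives no argument for this statement; it is imported from \cite[Proposition 3.1]{MR4502610}. Your key step is the map $A\ast B\to (A\ast B)/\gamma_{k+1}(A\ast B)\times A\times B$ with kernel $\gamma_{k+1}(A\ast B)\cap[A,B]$. It realizes $A\vW B$ as a subdirect product of the class-$k$ nilpotent quotient of $A\ast B$ and $A\times B$. This is the same decomposition the paper uses in its self-contained proof of Theorem \ref{nilpotentcase}: $x=1$ in $A\vW B$ if and only if $x\in[A,B]$ and $x\in W(A\ast B)$. So your argument fits the paper's viewpoint.

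The route gives you more than the bare statement. The nilpotency class is at most $\max\{k,c(A),c(B)\}$. Since the $A\times B$ coordinate is injective on $A$ and on $B$, both factors embed in $A\vW B$, so the class is at least $\max\{c(A),c(B)\}$. It also avoids working only with the extension $1\to[A,B]/N\to A\vW B\to A\times B\to 1$, from which nilpotency would not follow directly.

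One tightening. The phrase ``abelian-free in the cross terms'' in (1) is not an argument; your second bullet is the real proof. There, ``forcing equality'' should be justified as follows: the composite $A\times B\to (A\ast B)/[A,B]\to A\times B$ is the identity on $A$ and on $B$, hence the identity, so the second map is injective. Alternatively, the paper asserts $\ker(A\ast B\to A\times B)=[A,B]$ in the proof of Theorem \ref{nilpotentcase}, so you could simply quote it.
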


\section{The word problem for nilpotent and metabelian products}

Let $F_n$ be the free group in the $n$ letters $\{x_1,\ldots,x_n\}$ and let $G$ be the group $G=\langle x_1,\dots,x_n|R\rangle $, where $R$ denotes the set of relations defining $G$. By definition, the group $G$ has decidable word problem if there exists an algorithm with input $w\in F_n$ which returns the value $1$ if $w=1$ in $G$ and $0$ if not. 
If $H$ is subgroup of $G$, the group $G$ has decidable membership problem in $H$ if there exists an algorithm with input $w\in F_n$ which returns the value $1$ if $w\in H$ and $0$ if not.

 \begin{theorem} \label{nilpotentcase}
 Let $A$ and $B$ be finitely generated groups with decidable word problem. Let $k\in\N$ and  let $W=\{n_k\}$ be word in Example \ref{examples of verbal groups}\ref{item2examples},
 that defines the $k$-nilpotent product of groups, or let $W=\{s_2\}$ be the  word in Example \ref{examples of verbal groups}\ref{item3examples}
 that defines the 2-solvable product of groups. In all these cases, $A \vW B$ has decidable word problem.
 \end{theorem}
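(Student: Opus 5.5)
The plan is to reduce the word problem in $A\vW B$ to two questions about a word $w$ in the generators of $A$ and $B$:
\begin{enumerate*}[label=(\alph*)]
\item whether $w\in[A,B]$, and
\item whether $w$ lies in $W(A\ast B)\cap[A,B]$.
\end{enumerate*}
Question (a) is easy. The natural map $A\ast B\to A\times B$ has kernel exactly $[A,B]$. So $w\in[A,B]$ if and only if the product of the $A$-letters of $w$ is trivial in $A$ and the product of the $B$-letters is trivial in $B$. Both are decidable by hypothesis. Note also that $A\ast B$ itself has decidable word problem, by reducing syllables with the word problems of the factors. Hence if $w\notin[A,B]$ we output $0$, and otherwise we pass to question (b).

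For question (b), one half is routine: the set of words trivial in $A\vW B$ is recursively enumerable. We enumerate products of conjugates of values $v(g_1,\dots,g_r)$ with $v\in W$, and compare each with $w$ using the word problem of $A\ast B$. The real work is the complementary half. I would use the explicit description of $[A,B]$: it is free on the elements $[a,b]$ with $a\in A\setminus\{1\}$ and $b\in B\setminus\{1\}$, and $A\ast B$ acts on it by conjugation. Using the word problems of $A$ and $B$, one can effectively rewrite any $w\in[A,B]$ as a word in finitely many such basis elements $[a_i,b_j]$. The goal is then to describe $W(A\ast B)\cap[A,B]$ inside this free group in a computable way.

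\emph{Nilpotent case.} I would establish (or extract from Golovin's analysis) that $\gamma_{k+1}(A\ast B)\cap[A,B]$ is computable in terms of the finitely many elements $a_i\in A$, $b_j\in B$ occurring in $w$. The intended mechanism is that only finitely many iterated commutators of bounded weight are involved, so the question becomes a membership question in a finitely generated nilpotent group. Such groups are finitely presented and have decidable word problem. Concretely, I would pass to the subgroups $A_0=\langle a_i\rangle$ and $B_0=\langle b_j\rangle$ and use that the $k$-nilpotent product of subgroups embeds in the product of the groups. The problem is then reduced to $A_0\ast_k B_0$, whose structure modulo $\gamma_{k+1}$ depends only on the finitely generated nilpotent quotients $A_0/\gamma_{k+1}A_0$ and $B_0/\gamma_{k+1}B_0$ together with the decidable equalities among the $a_i$ and $b_j$. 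Proposition \ref{preservenilp} applied to these quotients makes the resulting group finitely generated nilpotent.

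\emph{Metabelian case.} I would try to identify $(A\ast B)''\cap[A,B]$ concretely. The candidate is $[A,B]'$ times a controlled contribution coming from $[A',B']$. Modulo $[A,B]'$, the group $[A,B]$ becomes a free abelian group on the $[a,b]$, which is a $\mathbb Z[A\times B]$-module with an explicitly computable action. Deciding membership then reduces to linear algebra over $\mathbb Z$ in a finitely supported piece of this module, again using only the word problems in $A$ and $B$.

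The main obstacle is exactly this structural identification of $W(A\ast B)\cap[A,B]$ and its computable normal form, especially in the metabelian case, where $(A\ast B)''$ can meet $[A,B]$ in more than $[A,B]'$. The first thing I would attempt is to prove that membership of $w$ depends only on the finitely generated subgroups generated by the letters of $w$, and that the relevant quotient is finitely presented in these varieties. That would make the co-enumeration effective and finish the proof.
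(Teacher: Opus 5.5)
There is a genuine gap, and it comes from a missing one-line reduction. Once (a) has certified $w\in[A,B]$, the condition $w\in W(A\ast B)\cap[A,B]$ is simply $w\in W(A\ast B)$, i.e.\ $w=1$ in the relatively free quotient $(A\ast B)/W(A\ast B)$. Because $A$ and $B$ are finitely generated, this quotient is a finitely generated nilpotent group of class at most $k$ when $W=\{n_k\}$, and a finitely generated metabelian group when $W=\{s_2\}$. Both classes have decidable word problem:
\begin{itemize}
\item finitely generated nilpotent groups are finitely presented and residually finite;
\item by Hall, finitely generated metabelian groups satisfy the maximal condition on normal subgroups, so they are finitely presented inside the metabelian variety, and they are residually finite. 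Residual finiteness is what gives the co-enumeration you are looking for.
\end{itemize}
This is the paper's whole proof: the two conditions are decided separately, and no description of $W(A\ast B)\cap[A,B]$ inside the free group $[A,B]$ is needed. So the obstacle you single out in the metabelian case is illusory. As written, though, your metabelian case is not a proof. The identification of $(A\ast B)''\cap[A,B]$ is left open, and the ``linear algebra over $\mathbb Z$ on a finitely supported piece'' is unjustified, since the relevant submodule is generated over the infinite group ring $\mathbb Z[A\times B]$.

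Your nilpotent argument also contains a false step. For $k\geq 2$ the $k$-nilpotent product of subgroups does not embed in the product of the groups, and membership of $w$ is not determined by the subgroups generated by the elements occurring in $w$. Take $W=\{n_2\}$, $A$ the Heisenberg group $\langle x,y\mid [x,[x,y]],[y,[x,y]]\rangle$, $z=[x,y]$, and $B=\langle b\rangle\cong\mathbb Z$. Since $z\in\gamma_2(A\ast B)$, we get $[z,b]\in\gamma_3(A\ast B)\cap[A,B]$, so $[z,b]=1$ in $A\vW B$. But with $A_0=\langle z\rangle$ and $B_0=B$, the group $A_0\vW B_0$ is $F_2/\gamma_3(F_2)$ with $F_2$ free on $z,b$, and there $[z,b]\neq 1$. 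So passing to $A_0=\langle a_i\rangle$, $B_0=\langle b_j\rangle$ loses exactly the information that decides membership, here the fact that $z\in A'$. The passage is also unnecessary, since $A$ and $B$ are finitely generated by hypothesis. If you drop it and use the word problem of the finitely generated nilpotent group $(A\ast B)/\gamma_{k+1}(A\ast B)$ directly, your nilpotent case becomes the paper's argument.
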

 
 \begin{proof}
 This can be deduced from structural results in \cite[Section $\S 2$]{MR4502610}. We provide a self contained proof instead.
 
By Definition \ref{def of verbal product},
given a word $x$ in the generators of $A\ast B$, we have $x=1$ in $A \vW B$ if and only if both $x\in[A,B]$ and $x\in W(A\ast B)$. We decide each condition separately.
For the first condition, let $\pi: A\ast B\to A\times B$ be the canonical projection, its kernel is $[A,B]$. The element $\pi(x)=(a, b)$ is computable from $x$ (collect the letters of $x$ belonging to $A$ and to $B$ respectively, preserving their order), and $x\in[A,B]$ if and only if $a=1$ in $A$ and $b=1$ in $B$; both conditions are decidable because $A$ and $B$ have decidable word problem.
For the second condition, recall  that in the nilpotent case, $W=\{n_k\}$, $W(A\ast B)$ is the $(k+1)$-st term of the lower central series of $A\ast B$, so that $\faktor{(A\ast B)}{W(A\ast B)}$ is a finitely generated nilpotent group of class at most $k$; such a group has decidable word problem,  (see for example \cite[Theorem 1.2]{MR49889}). Since $x\in W(A\ast B)$ if and only if $x=1$ in this quotient, the second condition is decidable as well.  Meanwhile when  $W=\{s_2\}$, $\faktor{(A\ast B)}{W(A\ast B)}$ is a finitely generated metabelian group and hence it has decidable word problem, (see for example  \cite[Theorem 2.1]{MR1202419} where they attribute it to Hall).
 \end{proof}
 
 \section{The word problem for solvable products}
 
 The proof of Theorem \ref{nilpotentcase} already 
hints that a similar result for $k$-solvable products might be false when $k\geq 3$. Indeed, the group $\faktor{(A\ast B)}{W(A\ast B)}$
is solvable when  $W=\{s_k\}$ is the word in Example \ref{examples of verbal groups}\ref{item3examples}.  A celebrated theorem of Kharlampovich, \cite{MR631441}, shows the existence of a finitely presented, solvable group of derived length $3$ with undecidable word problem. 

The aim of the next two sections is to exploit these observations to give families of examples of groups that show that for every $k\geq 3$, the $k$-solvable product of two groups with decidable word problem need not to have decidable word problem.

By Definition \ref{def of verbal product} with $W=\{s_k\}$, and $k\geq 3$, it will be enough to exhibit two groups $A$ and $B$ such that the 
membership problem in $(A\ast B)^{(k)}\cap [A,B]$ is undecidable. For this, the most natural strategy will be to exhibit two groups $A$ and $B$ with $a\in A, b\in B$,  
such that the membership problem $[a,b]\in (A\ast B)^{(k)}$ is undecidable.  It also makes sense to take an ``easy'' group $B$, say $B=\Z$, and transfer the burden of the obstruction to the group $A$. This transfer is indeed achievable by means of the next two lemmas, which are interesting in their own right, since they are about localising (or spotting)  mixed commutators on free products.

 \begin{lemma}\label{in commutator}
Let $A,B$ be  groups. Let $M$ be a normal subgroup of $A$ and let $N$ be a normal subgroup of $B$.
If
$$
\varphi_{M,N}:A\ast B\longrightarrow(A/M)\times (B/N)
$$
is the canonical map and
$H_{M,N}:=\ker\varphi_{M,N}$,
then for every $a\in M$ and every $b\notin N$, 
$[a,b]\in H_{M,N}'$ if and only if  $a\in M'$.
\end{lemma}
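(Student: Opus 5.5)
The plan is to pass to the abelianization of $H:=H_{M,N}$ and to use the Kurosh subgroup theorem, via the Bass--Serre tree of $A\ast B$, to identify $H/H'$ explicitly. The starting observation is that $H$ is normal in $A\ast B$ and contains $M$. Hence for $a\in M$ both $a$ and $a^b=bab^{-1}$ lie in $H$. Since
$$[a,b]=a\,(a^{-1})^{b}=a\,(a^b)^{-1},$$
we get $[a,b]\equiv a\cdot (a^b)^{-1}$ modulo $H'$.

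The direction ``$\Leftarrow$'' is then immediate. If $a\in M'$, then $a\in H'$, and $a^b\in (M^b)'\subseteq H'$ because $M^b\subseteq H$. Therefore $[a,b]\in H'$.

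For ``$\Rightarrow$'' I would let $H$ act on the Bass--Serre tree $T$ of $A\ast B$. Its vertices are the cosets $gA$ and $gB$, and its edges are the elements $g$ of $A\ast B$. The edge stabilizers are trivial, so the Kurosh theorem gives
$$H=\Big(\mathop{\ast}_{v\in\mathcal{V}_A}(H\cap \mathrm{Stab}(v))\Big)\ast\Big(\mathop{\ast}_{v\in\mathcal{V}_B}(H\cap \mathrm{Stab}(v))\Big)\ast F,$$
where $F$ is free and $\mathcal{V}_A$, $\mathcal{V}_B$ are sets of representatives of the $H$-orbits of vertices of each type.

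I would next compute the $A$-type part. The stabilizer of $gA$ is $gAg^{-1}$, and $H\cap gAg^{-1}=gMg^{-1}$, because $\varphi_{M,N}$ restricted to $gAg^{-1}$ has kernel $gMg^{-1}$. The $H$-orbits of $A$-cosets are in bijection with $H\backslash(A\ast B)/A\cong (A/M\times B/N)/(A/M)\cong B/N$. So I may choose the representatives $1\cdot A$ and $b\cdot A$ for the two distinct classes $N$ and $bN$; these classes are distinct exactly because $b\notin N$. Then $M$ and $M^b$ appear as two different free factors of $H$.

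Abelianizing, $H/H'$ is a direct sum containing $M/M'\oplus M^b/(M^b)'$ as a direct summand. In this sum the image of $a\,(a^b)^{-1}$ is $(\bar a,-\overline{a^b})$. If $[a,b]\in H'$, this image vanishes, so $\bar a=0$ in $M/M'$, that is $a\in M'$.

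The main obstacle is justifying carefully that the Kurosh decomposition has exactly these factors, with $M$ and $M^b$ as literal distinct free factors rather than factors only up to $H$-conjugacy. The point is that in Bass--Serre theory one may pick any vertex in each orbit, provided one chooses a fundamental domain, that is, a lift of a maximal tree of $H\backslash T$, containing both $A$ and $bA$. If that becomes delicate, a fallback is to write an explicit homomorphism $H\to M/M'\times M^b/(M^b)'$ that is the identity on the two chosen factors and kills all other factors.
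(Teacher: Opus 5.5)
Your proposal is correct and follows essentially the same route as the paper. Both apply Kurosh's theorem to $H_{M,N}$, use $b\notin N$ to get distinct double cosets $H_{M,N}A\neq H_{M,N}bA$ so that $M$ and $M^b$ are distinct free factors, then abelianize and read off the coordinate of $a\,(a^b)^{-1}$. The obstacle you flag is harmless: the path $A - B - bA$ in the Bass--Serre tree has its three vertices in distinct $H_{M,N}$-orbits, so it projects injectively to the quotient graph and extends to a lift of a maximal tree containing both $A$ and $bA$.
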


\begin{proof}
The proof uses Kurosh Subgroup Theorem as it is presented in \cite[Chapter 7, Theorem 5.2]{MR448331} applied to $H_{M,N}$ as a subgroup of  $A\ast B$.\\
Observe that $b\notin H_{M,N}A$ because every element of $H_{M,N}A$ 
has trivial second coordinate under $\varphi_{M,N}$, while $\varphi_{M,N}(b)$ has non trivial second coordinate. 
 This means that double cosets $H_{M,N}A$ and $H_{M,N}bA$ are disjoint.
Thus 
 $$H_{M,N}\cap A= M \text{ and } H_{M,N}\cap bAb^{-1}=bMb^{-1}$$
  are subgroups of $H_{M,N}$ that arise from distinct double cosets. Hence
 by  Kurosh Theorem, \cite[Theorem 5.2]{MR448331}, $M$ and $M^{b} = bMb^{-1}$ are free factors of $H_{M,N}$ in the Kurosh free product decomposition of $H_{M,N}$. 
Then by taking abelianizations, it follows that
$$
M/M'\ \oplus\ M^b/(M^b)'\ \leq\ H_{M,N}/H_{M,N}'.
$$
As $[a,b]\in H_{M,N}'$ if and only its class in the abelianization of $H_{M,N}$ is equal to 0, computing the class of $[a,b]$ in the abelianization, we have 
\[\overline{[a,b]} = (\overline{a}, \overline{ba^{-1}b^{-1}}) \in M/M'\ \oplus\ M^b/(M^b)'\] 
and this is equal to zero if and only if  $\overline a=0$ in $M/M'$, if and only if $a\in M'$.
\end{proof}

\begin{lemma}\label{commutator localizations}
Let $A$ and $B$ be a groups.
Then for every $k\geq2$,  every $a\in A$ and every $b\notin B^{(k-1)}$,
$$
[a,b]\in (A\ast B)^{(k)}
\quad\Longleftrightarrow\quad
a\in A^{(k)}.
$$
\end{lemma}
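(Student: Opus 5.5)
The plan is to prove the two implications separately. The forward implication splits into two cases, according to whether $a\in A^{(k-1)}$ or not. The case $a\in A^{(k-1)}$ is handled by Lemma \ref{in commutator}. The case $a\notin A^{(k-1)}$ is handled by passing to a quotient free product and using the free basis of the group $[\,\cdot\,,\cdot\,]$.

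($\Leftarrow$) Suppose $a\in A^{(k)}$. Since $A\le A\ast B$, we have $A^{(k)}\le (A\ast B)^{(k)}$, so $a\in (A\ast B)^{(k)}$. This subgroup is normal in $A\ast B$, so also $ba^{-1}b^{-1}\in (A\ast B)^{(k)}$. Hence $[a,b]=a\cdot ba^{-1}b^{-1}\in (A\ast B)^{(k)}$. This direction does not use the hypothesis on $b$.

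($\Rightarrow$), case $a\in A^{(k-1)}$. Apply Lemma \ref{in commutator} with $M=A^{(k-1)}$ and $N=B^{(k-1)}$; these are normal because verbal subgroups are. The target $(A/M)\times(B/N)$ is solvable of derived length at most $k-1$. Therefore $(A\ast B)^{(k-1)}\le H_{M,N}$, and taking derived subgroups gives $(A\ast B)^{(k)}\le H_{M,N}'$. So if $[a,b]\in(A\ast B)^{(k)}$, then $[a,b]\in H_{M,N}'$. Since $a\in M$ and $b\notin N$, Lemma \ref{in commutator} gives $a\in M'=A^{(k)}$.

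($\Rightarrow$), case $a\notin A^{(k-1)}$. Here we must show $[a,b]\notin(A\ast B)^{(k)}$; since $a\notin A^{(k)}$ in this case, this proves the forward implication. Set $\bar A=A/A^{(k-1)}$ and $\bar B=B/B^{(k-1)}$, and consider the canonical epimorphism $A\ast B\to \bar A\ast\bar B$. It maps $(A\ast B)^{(k)}$ into $(\bar A\ast\bar B)^{(k)}$, so it suffices to show $[\bar a,\bar b]\notin(\bar A\ast\bar B)^{(k)}$, where $\bar a\neq1$ and $\bar b\neq 1$ by assumption.

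The quotient $(\bar A\ast\bar B)/[\bar A,\bar B]\cong\bar A\times\bar B$ is solvable of length at most $k-1$. Hence $(\bar A\ast\bar B)^{(k-1)}\le[\bar A,\bar B]$, and so $(\bar A\ast\bar B)^{(k)}\le[\bar A,\bar B]'$. It remains to see that $[\bar a,\bar b]\notin [\bar A,\bar B]'$. For this we use the standard fact, from the same reference in \cite{serreTrees}, that $[\bar A,\bar B]$ is free on the elements $[x,y]$ with $x\in\bar A\setminus\{1\}$ and $y\in\bar B\setminus\{1\}$. Then $[\bar a,\bar b]$ is a basis element of a free group, so its image in the abelianization is nonzero, and hence $[\bar a,\bar b]\notin[\bar A,\bar B]'$.

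The main obstacle is this last case: Lemma \ref{in commutator} does not apply directly since $a\notin M$. The key points are to replace $A,B$ by their quotients modulo the $(k-1)$-st derived subgroups and to invoke the explicit free basis of $[\bar A,\bar B]$. If one prefers not to cite the free basis statement, Lemma \ref{in commutator} with $M=\bar A$ and $N=\{1\}$ could be used instead. However, that would require $\bar a\notin\bar A'$, which need not hold, so the free-basis argument is the cleaner route.
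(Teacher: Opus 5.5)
Your proof is correct. It differs from the paper's in how it handles $a\notin A^{(k-1)}$.

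The paper uses a single uniform induction on $r=0,\dots,k$. Assuming $a\in A^{(r)}$, it applies Lemma \ref{in commutator} with $M=A^{(r)}$ and $N=B^{(k-1)}$. Since $(A/A^{(r)})\times(B/B^{(k-1)})$ has derived length at most $k-1$, one gets $(A\ast B)^{(k)}\le H_r'$, and the lemma then puts $a$ in $A^{(r+1)}$.

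Your case $a\in A^{(k-1)}$ is exactly the final step $r=k-1$ of that induction. Your case $a\notin A^{(k-1)}$ replaces the steps $r=0,\dots,k-2$ with one argument in $\bar A\ast\bar B$. That argument needs a sharper input than the paper quotes: not only that $[\bar A,\bar B]$ is free, but that the commutators $[x,y]$ with $x\in\bar A\setminus\{1\}$, $y\in\bar B\setminus\{1\}$ form a free basis. This is indeed the content of Serre's Proposition 4.

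Your closing remark explains why the paper iterates. One application of Lemma \ref{in commutator} with $M=A$ only detects $a\notin A'$. So the paper walks down the derived series one step at a time, using nothing beyond the Kurosh-based lemma.

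What each route buys: your version avoids the induction and shows in one stroke that $[a,b]\notin(A\ast B)^{(k)}$ whenever $a\notin A^{(k-1)}$. The price is the explicit basis of the Cartesian subgroup, which is a Kurosh/Bass--Serre fact of the same depth. The paper's version is uniform and self-contained once Lemma \ref{in commutator} is available.
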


\begin{proof}
If $a\in A^{(k)}$, then $a\in (A\ast B)^{(k)}$.  Since $(A\ast B)^{(k)}$ is normal in $A\ast B$,
$[a,b]\in (A\ast B)^{(k)}$.

Conversely, suppose that $[a,b]\in (A\ast B)^{(k)}$.
We prove inductively that for all $0\leq r \leq k, a\in A^{(r)}$. The case $r=0$ is trivial.
Assume that, for some $0\leq r\leq k-1$,
$a\in A^{(r)}$. 
Apply Lemma \ref{in commutator} with
$
M=A^{(r)}$ and 
 $N=B^{(k-1)}$.
Let
$$
H_r
=
\ker\left(
A\ast B
\longrightarrow
(A/A^{(r)})\times(B/B^{(k-1)})
\right).
$$

Now $A/A^{(r)}$ has derived length at most $r$, while
$B/B^{(k-1)}$ has derived length at most $k-1$. Hence
$$
\bigl((A/A^{(r)})\times(B/B^{(k-1)})\bigr)^{(k-1)}=1.
$$
Therefore
$(A\ast B)^{(k-1)}\leq H_r$,
and
$(A\ast B)^{(k)}\leq H_r'$.
Since, by hypothesis,
$[a,b]\in (A\ast B)^{(k)}$, we have $[a,b]\in H_r'$.
Moreover, by the inductive hypothesis
$a\in A^{(r)}=M$,
and by the hypothesis of the Lemma,
$ b\notin B^{(k-1)}=N$.
Thus Lemma \ref{in commutator} implies
$a\in M' = (A^{(r)})' = A^{(r+1)}$. By induction, $a\in A^{(k)}$.
\end{proof}

\begin{theorem}\label{main theorem} Let $A$ be group with undecidable membership problem in its $k$-derived subgroup $A^{(k)}$, $k\geq 2$, and let $B$ be a non-perfect group.  
Let $W=\{s_k\}$ be the word in Example \ref{examples of verbal groups}\ref{item3examples}
 that defines the $k$-solvable product of groups.Then $A \vW B$ has undecidable word problem.
\end{theorem}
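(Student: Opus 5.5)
The plan is to reduce the membership problem for $A^{(k)}$ in $A$ to the word problem of $A\vW B$, using Lemma \ref{commutator localizations} to turn membership of $a$ in $A^{(k)}$ into triviality of the mixed commutator $[a,b]$ in the verbal product. A decision procedure for the latter would then yield one for the former, which contradicts the hypothesis on $A$.

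First I fix the auxiliary element. Since $B$ is not perfect, $B'\neq B$, so I choose once and for all some $b\in B\setminus B'$, written as a fixed word in the generators of $B$. For $k\geq 2$ the derived series is decreasing, so $B^{(k-1)}\leq B^{(1)}=B'$, and hence $b\notin B^{(k-1)}$. This is exactly the hypothesis on $b$ required in Lemma \ref{commutator localizations}.

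Next, for an arbitrary word $a$ in the generators of $A$, I consider the word $[a,b]=aba^{-1}b^{-1}$ in the generators of $A\ast B$; it is computable from $a$. By definition $[a,b]\in[A,B]$, so by Definition \ref{def of verbal product} we have $[a,b]=1$ in $A\vW B$ if and only if $[a,b]\in W(A\ast B)$. With $W=\{s_k\}$ this verbal subgroup is $(A\ast B)^{(k)}$, by Example \ref{examples of verbal groups}\ref{item3examples}. Lemma \ref{commutator localizations} then gives
\[
[a,b]=1 \text{ in } A\vW B \iff [a,b]\in (A\ast B)^{(k)} \iff a\in A^{(k)}.
\]

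Finally, suppose $A\vW B$ had decidable word problem. The procedure ``on input $a$, form $[a,b]$ and run the word problem algorithm on it'' would then decide membership in $A^{(k)}$, which is impossible by assumption. Hence $A\vW B$ has undecidable word problem.

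There is no real obstacle, since the heavy lifting is done by Lemma \ref{commutator localizations}. The only points to check are the inclusion $B^{(k-1)}\leq B'$, which uses $k\geq 2$, and that the generators of $A$ and $B$ together generate $A\vW B$, so that $[a,b]$ is a legitimate input to the word problem. Implicitly, $A$ and $B$ are assumed finitely generated (or at least given with computable generating sets), so that both problems are posed on words.
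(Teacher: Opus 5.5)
Your proposal is correct and matches the paper's proof: pick $b\notin B^{(k-1)}$ using that $B$ is non-perfect, then apply Lemma \ref{commutator localizations} to $[a,b]$. Since $[a,b]\in[A,B]$, triviality of $[a,b]$ in $A\vW B$ reduces to membership in $(A\ast B)^{(k)}$, which by the lemma is equivalent to $a\in A^{(k)}$. You also spell out $B^{(k-1)}\leq B'$ and the computability of the reduction $a\mapsto[a,b]$, which the paper leaves implicit.
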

 
 \begin{proof}
 Consider $a\in A$. Since $B$ is non perfect, there exists $b\in B\setminus B^{(k-1)}$.  
 By Lemma \ref{commutator localizations}, $[a,b]\in  (A\ast B)^{(k)}$ if and only if $a\in A^{(k)}.$ Since there is no algorithm to decide whether an element in $A$ belongs to its $k$-derived subgroup, there is no algorithm to decide whether $[a,b]\in  (A\ast B)^{(k)}\cap[A,B]$. Hence there is no algorithm to decide whether $[a,b]=1$ in  $A\vW B$.
 \end{proof}
 \begin{remark} This theorem is another instance that illustrates that non-perfect groups are the right building blocks for verbal products. Compare with \cite[Corollary 2.18]{SAS18}.
\end{remark}

Theorem \ref{main theorem} would be void for our purposes unless we exhibit finitely presented groups with decidable word problem and undecidable membership problem in their derived subgroups. The goal of next section is to construct such groups.
 
\section{Construction of hyperbolic groups with undecidable membership problem}

 We will use small cancellation theory. Our treatment is mainly borrowed from  \cite[Chapter 4, $\S$12]{MR1191619}, see also \cite{MR1812024} and \cite{MR1086661}.
 
Let $G=\langle x_1,\ldots, x_n | R\rangle$ be a finitely presented group. We assume that $R$ is cyclically reduced, namely that each $r\in R$ is reduced and that the first and last letters of $r$ are not inverses of each other. We define a larger set of relators $R^*$ by first adding to $R$ the set $R^{-1}$ of inverses of the words in $R\subseteq F_n$ and then by adding, for each $r=XY\in R\cup R^{-1}$, the word $YX$. The set $R^*$ is called the {\it symmetrization} of $R$. It is clear that $G=\langle x_1,\ldots, x_n | R^*\rangle$.
If $r_1=XY_1$ and $r_2=XY_2$ are distinct words in $R^*$ that begin with $X$, we say that $X$ is a {\it piece} relative to $R^*$. 

Let $\lambda \in \R$, $0< \lambda\leq 1$. We say that $G$ with the presentation $\langle x_1,\ldots, x_n | R^*\rangle$ satisfies the condition 
 $C'(\lambda)$ if for each $r=XY\in R^*$ with $X$ a piece, implies that $\ell(X)<\lambda \ell(r)$. 
If  $G$ admits a finite presentation satisfying the condition $C'(1/6)$, then $G$ has decidable word problem, (see for instance \cite[Chapter 4 $\S$12.3]{MR1191619}) and is hyperbolic, (see for instance  \cite[Theorem 36]{MR1086661}).

In \cite{MR631441}, Kharlampovich constructed  a  finitely presented, solvable  group of derived length $3$ that has undecidable word problem. This group will be the departing point of the groups we construct in the next statement.

\begin{proposition} \label{prop princ}
Let $n\in \N$, $n\geq 3$ and let $\lambda \in \R$, $0< \lambda\leq 1$. There exists a finitely presented group $ K_n(\lambda)$ that satisfies the condition $C'(\lambda)$ such that there is no algorithm to determine whether its elements belong to the $n$-derived subgroup.
\end{proposition}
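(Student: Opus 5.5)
The plan is to start from Kharlampovich's group \cite{MR631441} and pad its relators with long words lying deep in the derived series of a free group. The padding should make the presentation $C'(\lambda)$ without changing the quotient by the $n$-th derived subgroup.

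Let $S=\langle x_1,\dots,x_m\mid r_1,\dots,r_s\rangle$ be Kharlampovich's finitely presented solvable group of derived length $3$ with undecidable word problem. Add two new generators $y,z$ and let $F$ be the free group on $x_1,\dots,x_m,y,z$. For each $i$ choose a word $u_i\in F^{(n)}$ in the letters $y,z$ only; for instance $u_i=s_n(v_{i,1},\dots,v_{i,2^n})$, with $s_n$ as in Example \ref{examples of verbal groups}\ref{item3examples} and the $v_{i,j}$ of the form $y^{p}z^{q}$ with large exponents that are pairwise distinct across all $i,j$. Define
\[K_n(\lambda)=\langle x_1,\dots,x_m,y,z\mid r_1u_1,\dots,r_su_s\rangle .\]

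The algebraic step is to compare the quotient by the $n$-th derived subgroup. Write $N_1,N_2$ for the normal closures in $F$ of $\{r_iu_i\}$ and $\{r_i\}$. Since $u_i\in F^{(n)}$ and $F^{(n)}$ is normal, $N_1F^{(n)}=N_2F^{(n)}$. Moreover $K_n(\lambda)^{(n)}$ is the image of $F^{(n)}$, because derived subgroups are verbal and hence behave well under quotients. It follows that
\[K_n(\lambda)/K_n(\lambda)^{(n)}\cong F/N_2F^{(n)}\cong (S\ast F_2)/(S\ast F_2)^{(n)},\]
where $F_2$ is free on $y,z$. Now take a word $w$ in the $x$'s. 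Then $w\in K_n(\lambda)^{(n)}$ if and only if $w\in(S\ast F_2)^{(n)}$. Applying the retraction $S\ast F_2\to S$ that kills $y,z$, this holds if and only if $w\in S^{(n)}$. Since $n\ge 3$ and $S^{(3)}=1$, this is equivalent to $w=1$ in $S$. An algorithm for membership in $K_n(\lambda)^{(n)}$ would therefore solve the word problem of $S$, which is impossible.

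The main obstacle is the combinatorial step: choosing the $u_i$ so that the symmetrized set $R^*$ of $\{r_iu_i\}$ satisfies $C'(\lambda)$.

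First, cyclic reduction. Each $u_i$ starts and ends with letters in $y^{\pm1},z^{\pm1}$ and each $r_i$ is a word in the $x$'s, so $r_iu_i$ is cyclically reduced once $u_i$ is freely reduced.

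Next, bounding pieces. Any piece either lies in a bounded neighbourhood of an $x$-segment, of length at most $\max_i\ell(r_i)$ plus a couple of syllables, or lies inside the $u$-parts. Pieces inside the $u$-parts are controlled by taking the exponents $p,q$ huge and all distinct, say $p_{i,j},q_{i,j}$ growing geometrically. A common subword of two distinct cyclic conjugates then cannot contain a full syllable $y^{p}$, apart from matching a syllable of the same $u_i$ against itself. One has to handle the repetition caused by $v_{i,j}$ and $v_{i,j}^{-1}$ both occurring inside the iterated commutator $s_n$. For this, make the exponents inside each commutator distinct enough that any subword containing two consecutive full syllables occurs only once in $R^*$.

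Consequently every piece has length at most $C+2\max\{p,q\}$ for a constant $C$. Meanwhile $\ell(r_iu_i)$ grows like $4^n\cdot 2\max\{p,q\}$ times the number of syllables. To get a ratio below $\lambda$ for arbitrary $\lambda$, I would lengthen each $v_{i,j}$ to a product of many syllables $y^{p_1}z^{q_1}\cdots y^{p_t}z^{q_t}$ with all exponents distinct and $t$ large. A piece then spans at most about two syllables, while the relator has $\gtrsim t$ syllables, which gives $C'(\lambda)$. I expect checking this carefully, especially at the overlaps around the $r_i$ and between inverses in the commutators, to be the bulk of the work.
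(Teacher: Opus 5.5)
\textbf{There is a genuine gap, in the small cancellation step.} Your algebraic step is correct and is the paper's second claim; your retraction $S\ast F_2\to S$ even justifies the kernel computation that the paper only asserts. The problem is structural rather than a matter of bookkeeping: one iterated commutator per relator cannot give $C'(\lambda)$ for small $\lambda$.

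Write $u_i=ABA^{-1}B^{-1}$ with $A=s_{n-1}(v_{i,1},\dots,v_{i,2^{n-1}})$ and $B=s_{n-1}(v_{i,2^{n-1}+1},\dots,v_{i,2^n})$. Since $R^*$ also contains the inverse relator $(r_iu_i)^{-1}=BAB^{-1}A^{-1}r_i^{-1}$, the block $A$ is a common prefix of two distinct elements of $R^*$, namely $ABA^{-1}B^{-1}r_i$ and $AB^{-1}A^{-1}r_i^{-1}B$. Free reduction only touches syllables at the junctions, so $A$ minus at most its two boundary syllables is a piece, and likewise for $B$.

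Each $v_{i,j}$ occurs $2^{n-2}\ge 2$ times with each sign inside its half. So this piece is a fixed fraction of $\ell(u_i)$, roughly one quarter, and lengthening the $v_{i,j}$ or enlarging their exponents does not change that fraction. You must also make $\ell(u_i)$ large compared with the pieces coming from the $r_i$. Hence the ratio of piece to relator does not drop below roughly $1/4$, and your presentation is not $C'(1/6)$, which is exactly the case used later for hyperbolicity.

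In particular, your assertion that a subword containing two consecutive full syllables occurs only once in $R^*$ is false. Each $v_{i,j}$ already occurs $2^{n-1}$ times in $u_i$ with the same sign. Separately, $u_i$ is not freely reduced as written: for instance $v_av_b^{-1}$ contains $z^{q_a}z^{-q_b}$. So the syllable structure you analyse is not that of the reduced word.

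\textbf{The fix is the paper's construction.} Use many independent commutators per relator: take $S_i$ to be a product of $k$ values of $s_n$ on pairwise disjoint sets of fresh letters, with different letter sets for different $i$. Each $S_i$ still lies in $F^{(n)}$, so your algebraic argument applies verbatim. Now any subword crossing from one $s_n$-factor to the next, or between $r_i$ and $S_i$, occurs only once in $R^*$. So every piece lies either in the $x$-part, of length at most $M$, or inside a single $s_n$-factor, where the longest pieces are the $s_{n-1}$-blocks of length $4^{n-1}$. Since $\ell(r_iS_i)>k4^n$, choosing $k$ with $\max\{M,4^{n-1}\}<\lambda k4^n$ gives $C'(\lambda)$. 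Using fresh letters instead of words in $y,z$ also removes the need to control free reduction and coincidences among exponents.
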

\begin{proof}
 Let $K:=\langle x_1,\dots,x_d| r_1,\dots, r_l\rangle$ be the Kharlampovich's group  and let $M:= \max\{\ell(r_1),\dots,\ell(r_l)\}$.
 The aim of the construction is to add generators and to ``enlarge'' the relations given in $K$ in order to have control over the length of the pieces. In order to do that, we will use a product of elements in the derived series in sufficiently many new letters.
 To be precise, 
let $s_n$ be the word of Example \ref{examples of verbal groups}\ref{item3examples} which defines the $n$-subgroup of the derived series. 
For a suitable $k\in \N$ to be determined later and for each $1\leq i\leq l$, consider the set of new letters $\{y_1^{i},\dots,y_{k2^n}^{i}\}$ 
and consider the word
\[S_i\left(y_1^{i},\dots,y_{k2^{n}}^{i}\right):= s_n(y_1^{i},\dots,y_{2^n}^{i}) s_n(y_{2^{n}+1}^{i},\dots,y_{2\,2^{n}}^{i})\dots s_n(y_{(k-1)2^{n}+1}^{i},\dots,y_{k2^{n}}^{i}).\] 
Since   $\ell(s_n)=4^n$, it is clear that for each $1\leq i\leq l,\,\, \ell(S_i)=k4^n$. 

\begin{claim} Let $k$ be such that $\frac{\max(M,4^{n-1})}{k4^n}< \lambda$. Then the group with the presentation
\[ K_n(\lambda):= \langle x_1,\dots,x_d, y_1^{i},\dots,y_{k2^{n}}^{i}, 1\leq i \leq l| r_1 S_1, \dots, r_l S_l\rangle\]
satisfies the condition $C'(\lambda)$. 
\end{claim}
\begin{proof}[Proof of Claim]
Since the letters appearing in $S_i$ are different from the ones appearing in $S_j$ whenever $i\neq j$,  it follows that the pieces to be found among the cycles and its inverses of $r_iS_i$ and of $r_jS_j$ with $i\neq j$ must be subwords in $r_i$ and $r_j$ or their inverses, and hence they are of length at most $M$.

The aim now is   to bound the length of the largest pieces relative to  $\{S_i\}^{*}$, for a fixed $i$.
Keeping in mind that
 $s_n(y_1^{i},\dots,y_{2^n}^{i})=\left [s_{n-1}(y_1^{i},\dots,y_{2^{n-1}}^{i}),s_{n-1}(y_{2^{n-1}+1}^{i},\dots,y_{2^n}^{i})\right]$, by induction on $n$ it is easy to observe that the  largest pieces relative to the relations in $\left\{s_n(y_1^{i},\dots,y_{2^n}^{i})\right\}^{*}$ 
are  $s_{n-1}(y_1^{i},\dots,y_{2^{n-1}}^{i})$, $s_{n-1}(y_{2^{n-1}+1}^{i},\dots,y_{2^n}^{i})$ and their inverses, 
and hence they are of length equal to $4^{n-1}$. Since the letters appearing in different $s_n$-factors in $S_i$ are all distinct, it follows that the largest pieces relative to the relations in $\{S_i\}^{*}$ can only come from pieces of each of its individual  $s_n$-factors, so they are of the form 
$s_{n-1}(y_{j2^{n}+1}^{i},\dots,y_{j2^{n}+2^{n-1}}^{i}),s_{n-1}(y_{j2^{n}+2^{n-1}+1}^{i},\dots,y_{(j+1)2^n}^{i})$ where $0\leq j<k$, and their inverses.
From all this, and keeping in mind that the letters appearing in the $r_i$'s are different from the ones appearing in the $S_j$'s for all $1\leq i,j\leq l$, we conclude that the largest pieces relative to $\{ r_1 S_1, \dots, r_l S_l\}^{*}$ have length bounded by $\max\{M,4^{n-1}\}$. 
Since for all $1\leq i \leq l$, 
$\ell(r_i S_i)>\ell (S_i)=k4^{n}$, the claim follows.
\end{proof}
 
Let $W:=\{s_n\}$ be the word in Example \ref{examples of verbal groups}\ref{item3examples}. Observe that an element in $ K_n(\lambda)$ belongs to the $n$-derived subgroup $W( K_n(\lambda))$ if and only if its class in ${ K_n(\lambda)}/{W( K_n(\lambda))}$ is  $1$. Also observe that 
by  definition, $S_i\in W( K_n(\lambda))$, for all $1\leq i\leq l$.

 \begin{claim}
The subgroup generated by the classes of $\{x_1,\ldots,x_d\}$ in the quotient ${ K_n(\lambda)}/{W( K_n(\lambda))}$ is isomorphic to $K$. \end{claim}
\begin{proof}[Proof of Claim]
Write $L_K$ the free group generated by $\{x_1,\dots,x_d\}$ . The following diagram serves as guide:
\[
\begin{tikzcd}
L_K  \arrow[d, "\pi"] \arrow[r, "\rho"] & { K_n(\lambda)} \arrow[d, "\pi_W"] \\
K \arrow[r, dashed, "\phi"] & { K_n(\lambda)}/{W( K_n(\lambda))}
\end{tikzcd}
\]
, where $\pi$ and $\pi_W$ are the quotient projections and $\rho$ is defined by sending each $x_i$ to its class inside $K_n(\lambda)$.
Let's see that $\phi$ is a well-defined injective morphism. 
On the one hand, note that in $K_n(\lambda)$, $1 = r_iS_i = \rho(r_i)S_i$ so $\rho(r_i)=S_i^{-1}$ in  $K_n(\lambda)$. 
Since $S_i\in W( K_n(\lambda))$,
$\pi_W( \rho(r_i) ) = \pi_W(S_i^{-1})=1$ in $ K_n(\lambda)/W( K_n(\lambda))$.
Hence $r_i\in \ker(\pi_W\circ\rho)$ for all $1\leq i\leq l$. This shows that $\phi$ is a well-defined morphism.

For the injectivity, we must show that  $\pi(\ker(\pi_W\circ\rho)) = 1$. Note that 
$\ker(\pi_W\circ\rho )= W(L_K)\langle\langle r_1,\dots, r_l\rangle\rangle$, hence  $\pi(W(L_K)\langle\langle r_1,\dots, r_l\rangle\rangle)= \pi(W(L_K))= 1$ because $K$ is a solvable group of derived length equal to $3$ and $W$ is the solvable word greater or equal than $3$.
\end{proof}

 Since $K$ does not have decidable word problem, it follows that there is no algorithm to decide whether an element in $ K_n(\lambda)$ that is written only in the letters $\{x_1,\ldots,x_n\}$ is in the $n$-derived subgroup $W( K_n(\lambda))$.
\end{proof}

By taking $\lambda =1/6$ we have the following Corollary:
\begin{corollary}
For any $n\geq 3$, there exists an hyperbolic group such that there is no algorithm to determine whether the elements of the group belongs to the $n$-derived subgroup.
\end{corollary}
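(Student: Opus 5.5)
The plan is to specialize Proposition \ref{prop princ} to $\lambda=1/6$ and invoke the standard consequences of the small cancellation condition recalled at the start of this section. Fix $n\geq 3$. Proposition \ref{prop princ} gives a finitely presented group $K_n(1/6)$ with a finite presentation
\[\langle x_1,\dots,x_d, y_1^{i},\dots,y_{k2^{n}}^{i}, 1\leq i \leq l\mid r_1 S_1, \dots, r_l S_l\rangle\]
satisfying $C'(1/6)$. In this group there is no algorithm deciding membership in the $n$-derived subgroup $W(K_n(1/6))$, where $W=\{s_n\}$. By the fact quoted before Proposition \ref{prop princ}, a group with a finite $C'(1/6)$ presentation is hyperbolic, so $K_n(1/6)$ is the required group.

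The one point to check is that the strict inequality in the first claim can be met at $\lambda=1/6$. Since $M$ and $4^{n-1}$ are fixed, we only need $k$ with $6\max(M,4^{n-1})<k4^n$, and such $k$ exists. We should also make sure the convention matches. Proposition \ref{prop princ} is stated for $0<\lambda\leq 1$, and the claim produces pieces of length strictly less than $\lambda\,\ell(r)$, which is exactly the definition of $C'(\lambda)$ used in the paper. Hence the hypothesis of the hyperbolicity theorem cited there holds verbatim.

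I would also record that the symmetrized relators are cyclically reduced: the $r_i$ may be taken cyclically reduced, and $S_i$ is a reduced word in letters disjoint from those of $r_i$, so $r_iS_i$ is cyclically reduced. The hyperbolicity criterion therefore applies as stated. Finally, "elements of the group" refers to words in the generators, and the undecidability already holds for words in the letters $x_1,\dots,x_d$ by the second claim. There is no real obstacle here; the only thing requiring care is this bookkeeping of the constants and conventions.

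As a remark, the same group also has decidable word problem by the $C'(1/6)$ (Dehn algorithm) result. This makes it suitable input for Theorem \ref{main theorem}, where it plays the role of $A$ and is paired with $B=\Z$.
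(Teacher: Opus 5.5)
Your proposal is correct and follows the paper's own argument: specialize Proposition \ref{prop princ} to $\lambda=1/6$ and apply the cited fact that a finite $C'(1/6)$ presentation defines a hyperbolic group. Your additional bookkeeping (the choice of $k$ with $6\max(M,4^{n-1})<k4^n$, the strict-inequality convention in $C'(\lambda)$, and the cyclic reducedness of $r_iS_i$) is accurate and simply makes explicit what the paper's one-line deduction leaves implicit.
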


\begin{theorem}\label{last theorem}
For any $n\geq 3$, the $n$-solvable product of two finitely presented groups that have decidable word problem does not necessarily have decidable word problem.
\end{theorem}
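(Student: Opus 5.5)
The plan is to combine Theorem \ref{main theorem} with the groups produced in Proposition \ref{prop princ}. Fix $n\geq 3$ and take $A:=K_n(1/6)$, the finitely presented group of Proposition \ref{prop princ} with $\lambda=1/6$. Since its presentation satisfies $C'(1/6)$, it has decidable word problem by the small cancellation result quoted before the proposition. By Proposition \ref{prop princ}, the membership problem in its $n$-th derived subgroup $A^{(n)}$ is undecidable.

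For $B$ I would take $B=\Z=\langle t\mid\ \rangle$. It is finitely presented, has decidable word problem, and is non-perfect, since $B'=1\neq B$. In particular $t\notin B^{(n-1)}=1$, which is exactly the element Theorem \ref{main theorem} requires.

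Now apply Theorem \ref{main theorem} with $k=n\geq 3\geq 2$ and $W=\{s_n\}$. It gives that $A\vW B$, the $n$-solvable product of $A$ and $B$, has undecidable word problem. Both factors are finitely presented with decidable word problem, so this pair is the required counterexample.

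Two points need checking, though neither is a real obstacle. First, the notion of undecidable membership must match. Proposition \ref{prop princ} concerns words in the generators of $K_n(\lambda)$, and in fact only words in the letters $x_i$. The proof of Theorem \ref{main theorem} feeds words $a$ in the generators of $A$ into $[a,t]$, which is a word in the generators of $A\ast B$. So an algorithm for the word problem in $A\vW B$ would decide membership of $a$ in $A^{(n)}$, via Lemma \ref{commutator localizations}, contradicting Proposition \ref{prop princ}. Second, the word problem here means the word problem with respect to the finite generating set of $A\ast B$. That set descends to a finite generating set of $A\vW B$, and decidability does not depend on the choice of finite generating set.
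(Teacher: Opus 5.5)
Your proposal is correct and follows the paper's own proof. The paper likewise feeds $K_n(\lambda)$ from Proposition \ref{prop princ} (with $\lambda\le 1/6$, so the group has decidable word problem) into Theorem \ref{main theorem} with $k=n$, taking an arbitrary non-perfect $B$ with decidable word problem; your choice $B=\Z$ is a special case and appears among the paper's examples, and your two remarks about generating sets just make explicit what the paper leaves implicit.
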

\begin{proof}
Let $\lambda \in \R$, $0< \lambda\leq 1/6$  and let $K_{n}(\lambda)$  be the group of Proposition \ref{prop princ}. Let $B$ be any non-perfect group with decidable word problem.\\
 Let  $W=\{s_n\}$ be the word in Example \ref{examples of verbal groups}\ref{item3examples},
 that defines the $n$-solvable product of groups. By Theorem \ref{main theorem}, $ K_{n}(\lambda)\vW B$ has undecidable word problem.
\end{proof}

\begin{examples}
For each $n\geq 3$, for each $\lambda\leq 1/6$, 
\begin{enumerate} 
    \item for each $r\in \N$ the $n$-solvable product $K_{n}(\lambda)\vW \Z^r$ has undecidable word problem;
    \item for each $j\in \N_{\geq2}$ the $n$-solvable product $K_{n}(\lambda)\vW C_j$ has undecidable word problem;
    \item  for each $m\in \N$ the $n$-solvable product $K_{n}(\lambda)\vW F_m$ has undecidable word problem;
\end{enumerate}
    but each factor has decidable word problem.
\end{examples}

\noindent
\textbf{Acknowledgments.}
The first author thanks Professor Gast\'on Garc\'ia for his encouragement to return to do research after sometime away from the scientific community.

\bibliographystyle{alpha}
\bibliography{bibliography}

\end{document}